\documentclass[preprint,11pt]{elsarticle}

\usepackage{lmodern}
\usepackage{amsmath,amssymb,amsthm}
\usepackage[margin=1in]{geometry}
\usepackage{microtype}
\usepackage{hyperref}

\numberwithin{equation}{section}
\newtheorem{theorem}{Theorem}[section]
\newtheorem{lemma}[theorem]{Lemma}
\newtheorem{proposition}[theorem]{Proposition}
\newtheorem{corollary}[theorem]{Corollary}
\theoremstyle{definition}

\newtheorem{example}[theorem]{Example}
\newtheorem{remark}[theorem]{Remark}

\newcommand{\R}{\mathbb{R}}
\newcommand{\C}{\mathbb{C}}

\newcommand{\RZ}{\mathcal{RZ}}
\newcommand{\UC}{\mathcal{UC}}
\DeclareMathOperator{\Arg}{Arg}

\begin{document}
	
	\begin{frontmatter}
		
		\title{Interlacing on the Unit Circle via Coefficientwise Reciprocals}
		
		\author[a]{Jianxi Mao}
		\ead{maojx@dlut.edu.cn}
		
		\author[a]{Lijie Wang}
		\ead{lijiewang26@hotmail.com}
		
		\author[b]{Sainan Zheng\corref{cor1}}
		\ead{zhengsainan@dufe.edu.cn}
		\cortext[cor1]{Corresponding author}
		
		\address[a]{School of Mathematical Sciences, Dalian University of Technology,
			Dalian 116024, P. R. China}
		
		\address[b]{School of Data Science and Artificial Intelligence,
			Dongbei University of Finance and Economics,
			Dalian 116025, P. R. China}\begin{abstract}
			Let
			$
			f(z)=\sum_{k=0}^{n}a_kz^k
			$
			be a polynomial with positive coefficients, and define its coefficientwise
			reciprocal by
			$
			f^{\#}(z)=\sum_{k=0}^{n}\frac{z^k}{a_k}.
			$
			It is known that if $f$ is palindromic and has only negative real zeros, then all zeros of
			$f^{\#}$ lie on the unit circle.
			We prove that if $p$ and $q$ are palindromic polynomials of degrees $n$ and $n+1$, respectively, with only negative real zeros, then
			their coefficientwise reciprocals have only simple zeros,
			and $p^{\#}$ strictly interlaces $q^{\#}$ on the unit circle.
			Our proof is based on finite Blaschke
			products and the comparison of their boundary phases. As an immediate
			consequence, we obtain strict interlacing for the reciprocal binomial,
			reciprocal Eulerian, and reciprocal Narayana polynomials.
			By combining a sign change in the $\gamma$-coefficients
			with coefficientwise reciprocation, we further construct
			strictly interlacing families from Rogers--Szeg\H{o},
			Poupard, and Kreweras-related polynomials.
		\end{abstract}

		\begin{keyword}
			palindromic polynomial \sep polynomial with only real zeros \sep coefficientwise reciprocal \sep unit circle \sep interlacing zeros \sep Blaschke product
			\MSC[2020] 05A20 \sep 26C10 \sep 26C15\sep 30C15
		\end{keyword}
		
	\end{frontmatter}

	\section{Introduction}

	Determining whether all zeros of a polynomial lie in
	a prescribed region of the complex plane or on its
	boundary is a classical problem in the theory of
	polynomials~\cite{Marden1966,RahmanSchmeisser2002}.
	Such questions also motivate the study of transformations that preserve
	specified zero locations~\cite{BorceaBranden2009Circular}.

	Besides the location of the zeros of an individual polynomial,
	another important question concerns the interlacing of zeros.
	A classical example follows from Rolle's theorem:
	if $f$ has only real zeros, then the zeros of $f'$ interlace those of $f$.
	The interlacing property is also a fundamental tool for
	studying polynomial sequences with only real zeros~\cite{Brenti1989,LiuWang2007,Obreschkoff1963}.
	
	Interlacing problems have also been studied for polynomials whose zeros lie
	on the unit circle.
	For example, when the parameter lies in $[0,1]$, 
	the Rogers--Szeg\H{o} polynomials form a classical
	palindromic family with zeros on the unit circle through their connection with the continuous $q$-Hermite polynomials; see \cite{Ismail2009}.
	Lakatos and Losonczi \cite{LakatosLosonczi2007} obtained circular
	interlacing results for reciprocal polynomials, while Simon
	\cite{Simon2007} studied interlacing properties of paraorthogonal
	polynomials on the unit circle. In this paper, we study a different source
	of interlacing on the unit circle, arising from coefficientwise
	reciprocation.

	We say a polynomial
	$
	f(z)=a_0+a_1z+\cdots+ a_nz^n
	$
	is \emph{palindromic} if
	$
	a_k=a_{n-k}
	$
	for all $k$.
	Write $f\in\RZ$ if all zeros of $f$ are real, and write $P\in\UC$ if all
	zeros of $P$ lie on the unit circle
	$
	\{z\in\C:|z|=1\}.
	$
	In particular, if $P\in\UC$ has positive coefficients, then $P$ is palindromic. We define
	\[
	\mathcal P_n=
	\left\{
	f(z)=\sum_{k=0}^{n}a_kz^k
	\;\middle|\; f\textrm{ is palindromic and }
	a_k>0 \textrm{ for } 0\le k\le n
	\right\}.
	\]
	Clearly, if $f\in\mathcal P_n\cap\RZ$, then all zeros of $f$ are negative.
	A classical example is the binomial polynomial $(1+z)^n$.

	Suppose that $P,Q\in\UC$ and  $\deg Q=\deg P+1=n+1$. Let
	\[
	e^{i\theta_1},\ldots,e^{i\theta_n}
	\quad\text{and}\quad
	e^{i\tau_1},\ldots,e^{i\tau_{n+1}}
	\]
	be the zeros of $P$ and $Q$, respectively, where the arguments are ordered increasingly
	in $[0,2\pi)$.
	We say that $P$
	\emph{interlaces} $Q$ if
	\begin{equation}\label{eq:interlace}
		0\le\tau_1\le\theta_1\le\tau_2\le\cdots\le\tau_n\le\theta_n
		\le\tau_{n+1}<2\pi.
	\end{equation}
	We say that $P$ \emph{strictly interlaces} $Q$ if all inequalities in
	\eqref{eq:interlace} are strict.

	Let
	$
	f\in \mathcal P_n.
	$
	Define the \emph{coefficientwise reciprocal polynomial} of $f$ by
	\[
	f^{\#}(z):=\sum_{k=0}^{n}\frac{z^k}{a_k}.
	\]
	It is known that if $f\in\mathcal P_n\cap\RZ$, then
	$f^{\#}\in\UC$; see \cite{Chen1995} and \cite[Theorem~3.7]{ChenMaoWangXiao2026}.
	In this paper, we investigate whether coefficientwise reciprocation
	also produces interlacing on the unit circle for independently
	chosen polynomials of consecutive degrees.
	Our main result is the following.
	
	\begin{theorem}\label{thm:main}
		Let
		\[
		p(z)=\sum_{k=0}^{n}p_kz^k\in\mathcal P_n \cap \RZ,
		\qquad
		q(z)=\sum_{k=0}^{n+1}q_kz^k\in\mathcal P_{n+1} \cap \RZ,
		\]
		where $n\ge1$. Then
		\begin{enumerate}[(i)]
			\item $p^{\#},q^{\#}\in\UC$,  and all zeros of $p^{\#}$ and $q^{\#}$ are simple;
			\item
			$p^{\#}$ strictly interlaces $q^{\#}$.
		\end{enumerate}
	\end{theorem}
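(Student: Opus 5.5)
The plan is to convert each coefficientwise reciprocal into a finite Blaschke product whose boundary phase can be bounded using only the degree, and then compare phases. Let $f(z)=\sum_{k=0}^{N}a_kz^k\in\mathcal P_N\cap\RZ$. By Newton's inequalities the $a_k$ are strictly log-concave. Together with palindromicity this makes them strictly increasing for $k\le\lfloor N/2\rfloor$, so the numbers $b_k=1/a_k$ are strictly decreasing on the first half and satisfy $b_k=b_{N-k}$.

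I split $f^{\#}=h(z)+z^{N}h(1/z)$, where $h$ collects the first half of the coefficients. When $N$ is even I put half of the middle coefficient into $h$; when $N$ is odd no splitting is needed. The coefficients of $h$ are positive and strictly decreasing, with the last one possibly halved, which only helps. The Eneström--Kakeya theorem, in its strict form, then shows that $h$ has no zeros in $|z|\le 1$. Hence
\[
B(z)=\frac{z^{N}h(1/z)}{h(z)}
\]
is a finite Blaschke product of degree $N$, and $f^{\#}(z)=0$ exactly when $B(z)=-1$ with $|z|=1$; zeros off the circle are excluded because $|B|\ne1$ there.

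Write $B(e^{i\theta})=e^{i\phi(\theta)}$ with $\phi(\theta)=N\theta-2\arg h(e^{i\theta})$ and $\phi(0)=0$. Then $\phi'(\theta)=\sum_j P_{\alpha_j}(\theta)>0$, a sum of Poisson kernels over the zeros $\alpha_j$ of $B$, and $\phi$ increases by exactly $2\pi N$ over $[0,2\pi)$. So $\phi$ takes each value $(2k-1)\pi$, $k=1,\dots,N$, exactly once, and these are all the zeros of $f^{\#}$. This gives $f^{\#}\in\UC$ with $N$ simple zeros, which is part (i) for both $p$ and $q$.

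For part (ii), the difficulty is that $p$ and $q$ are unrelated, so interlacing must come from localizing each zero in an arc that depends only on the degree. The two extreme cases suggest the window. If $h$ is nearly constant, $f^{\#}\approx1+z^{N}$ and the zeros are at $e^{i(2k-1)\pi/N}$. If $h$ has all coefficients equal, $f^{\#}=1+z+\cdots+z^{N}$ and the zeros are at $e^{2\pi ik/(N+1)}$. I would therefore aim to prove that the $k$-th zero $\theta_k$ of $f^{\#}$ satisfies
\[
\frac{2\pi k}{N+1}<\theta_k<\frac{(2k-1)\pi}{N}\qquad\text{for } k\le N/2,
\]
with the mirror statement obtained from the symmetry $\theta\mapsto2\pi-\theta$ for the upper half. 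The phase form of this claim is the bound $0<2\arg h(e^{i\theta})<\dfrac{N\theta}{N+1}$ on $(0,\pi)$, up to the precise constant. I would derive it from the decreasing-coefficient structure: write $(1-z)h(z)=b_0-\sum_j(b_{j-1}-b_j)z^j-\cdots$ as a positive combination of the extreme configurations. Alternatively, I would use the Abel-summation representation $h(e^{i\theta})=\sum_j c_j(1+e^{i\theta}+\cdots+e^{ij\theta})$ with all $c_j>0$. This writes $h(e^{i\theta})$ as a positive combination of vectors with arguments $j\theta/2$, all lying in a half-plane sector $[0,m\theta/2]$, which bounds the argument.

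Once these windows are established for degrees $n$ and $n+1$, I would check the chain $\tau_k<\theta_k<\tau_{k+1}$ by elementary comparisons of the endpoints. On the first half this uses $(2k-1)\pi/(n+1)<2\pi k/(n+1)$ and the analogous inequality on the right; the second half follows by symmetry, and the middle zero, at $\pi$ when the degree is odd, is handled separately.

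The main obstacle is this last step. The naive windows are not fine enough uniformly in $k$: the check $\theta_k<\tau_{k+1}$ can fail for $k$ near $N/2$ if the windows are used on the whole circle. I therefore expect to need sharper phase bounds. The first thing I would try is to compare $\phi_q(\theta)$ with $\phi_p(\theta)$ directly and prove
\[
\phi_p(\theta)<\phi_q(\theta)<\phi_p(\theta)+2\pi\qquad\text{for }\theta\in(0,2\pi),
\]
which is equivalent to strict interlacing. This reduces to showing that $\theta-2\arg h_q+2\arg h_p\in(0,2\pi)$. I would bound $\arg h_p$ and $\arg h_q$ separately, using the sector bounds above refined by the strict monotonicity of the coefficients.
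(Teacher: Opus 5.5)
Your part (i) is correct and coincides with the paper's argument. Your $h$ is the paper's $(H[f^{\#}])^*$, and your $B=z^Nh(1/z)/h(z)$ is $\mathcal B_f$. Enestr\"om--Kakeya plus the strictly increasing boundary phase of a degree-$N$ Blaschke product give $N$ simple zeros on the circle. Your final reduction for (ii), namely showing $0<\theta-2\arg h_q(e^{i\theta})+2\arg h_p(e^{i\theta})<2\pi$, is also exactly the paper's route.

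The gap is the estimate that makes this reduction work. For both $h_p$ and $h_q$ you need the one-sided sector bound
\[
0<\arg h(e^{i\theta})<\frac{\theta}{2},\qquad 0<\theta<\pi,
\]
after which the difference lies in $(0,2\theta)\subset(0,2\pi)$. Your proposed tools cannot give this.

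\begin{itemize}
\item The Abel-summation picture is incorrect. Since $1+e^{i\theta}+\cdots+e^{ij\theta}=e^{ij\theta/2}\sin\frac{(j+1)\theta}{2}\big/\sin\frac{\theta}{2}$, this vector points in direction $j\theta/2+\pi$ once $(j+1)\theta>2\pi$.
\item Even where that picture is valid, a bound $\arg h_q\le m\theta/2$ allows $\theta-2\arg h_q+2\arg h_p<0$ when $m\ge2$.
\item More fundamentally, strict monotonicity of the half-coefficients cannot suffice, because the conclusion is false under that hypothesis alone. Take $1+0.01z+0.01z^2+z^3$ and $1+0.99z+0.98z^2+0.99z^3+z^4$. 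Both have strictly decreasing half-coefficients, but their first zeros lie at about $60.3^\circ$ and $71.7^\circ$ respectively. So the degree-$3$ polynomial does not interlace the degree-$4$ one.
\item Your windows are also reversed. For $k\le N/2$ one has $(2k-1)\pi/N<2\pi k/(N+1)$, so the interval you wrote is empty.
\end{itemize}

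The missing idea is to extract quantitative decay of the coefficients from real-rootedness, not just monotonicity. Normalize $a_0=a_N=1$ and write $h(z)=\sum_{k=0}^m c_kz^k$ with $m=\lfloor N/2\rfloor$.

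\begin{itemize}
\item AM--GM on the zeros gives $a_1\ge N$, so $c_1\le1/N$.
\item Newton's inequalities make $a_k/\binom Nk$ symmetric and log-concave, hence $a_k/a_{k-1}\ge(N-k+1)/k$. This yields $c_1\ge2c_2\ge\cdots\ge mc_m>0$; the halved middle coefficient is handled separately and still fits.
\item Consequently $\operatorname{Re}h(e^{i\theta})\ge1-\sum_{k\ge1}c_k\ge\frac12$.
\item Abel summation with $d_k=kc_k$ (and $d_{m+1}=0$) against Fej\'er's inequality $\sum_{k\le j}\frac{\sin k\theta}{k}>0$ gives $\operatorname{Im}h(e^{i\theta})>0$.
\item For the upper bound, use $|\sin\ell x|\le\ell\sin x$ and $\sum_{k\ge1}(2k-1)c_k\le\frac{2m}{N}-\sum_{k\ge1}c_k<1$ to get
\[
|h(e^{i\theta})|\sin\Bigl(\frac\theta2-\arg h(e^{i\theta})\Bigr)=\sin\frac\theta2-\sum_{k\ge1}c_k\sin\frac{(2k-1)\theta}{2}\ge\Bigl(1-\sum_{k\ge1}(2k-1)c_k\Bigr)\sin\frac\theta2>0.
\]
\end{itemize}

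With this sector bound your phase comparison goes through on $(0,\pi)$. The interval $(\pi,2\pi)$ follows by conjugation, and $\theta=\pi$ is immediate since $h(-1)>0$.
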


	The paper is organized as follows.
	In Section~\ref{Basic properties}, we collect the properties of
	coefficientwise reciprocal polynomials, finite Blaschke products, and
	half-polynomials needed in the proof.
	Section~\ref{proof of thm} establishes the required coefficient and
	sector estimates and proves Theorem~\ref{thm:main}.
	Section~\ref{sec:applications} presents applications to classical
	families of polynomials with only real zeros and to families obtained through the
	$\gamma$-sign transform.
	Finally, Section~\ref{Remark} discusses the
	Rodr\'iguez--Villegas transform and the resulting interlacing properties
	on the line $\operatorname{Re}z=-1/2$.

	\section{Basic properties}\label{Basic properties}

	We first present the basic properties needed in the sequel.
	When proving Theorem~\ref{thm:main}, we may assume that the
	polynomials are monic, i.e., the leading coefficient is 1.
	
	Let $a_0,a_1,\ldots,a_n$ be a sequence of positive real numbers.
	The sequence is said to be \emph{unimodal} if there exists an index
	$0\le m\le n$ such that
	$a_0\le\cdots\le a_{m-1}\le a_m\ge a_{m+1}\ge\cdots\ge a_n$.
	The sequence is said to be \emph{log-concave} if
	$a_k^2\ge a_{k-1} a_{k+1}$ for $k=1,\ldots,n-1$.
	Clearly, log-concavity implies unimodality.
	
	The Newton inequalities state that
	if
	$
	f(z)=\sum_{k=0}^n a_kz^k\in\RZ,
	$
	then
	\begin{equation}\label{Newton}
		a_k^2
		\ge
		a_{k-1}a_{k+1}
		\left(1+\frac{1}{k}\right)
		\left(1+\frac{1}{n-k}\right)
	\end{equation}
	for $1\le k\le n-1$.
	In particular, if $f\in\mathcal P_n\cap\RZ$, then
	$(a_k)_{k=0}^n$ is strictly log-concave. Since the coefficient
	sequence is symmetric, it increases strictly up to the middle:
	\begin{equation}\label{eq:coefincrease}
		0<a_0<a_1<\cdots<a_{\lfloor n/2\rfloor}.
	\end{equation}
	
	\subsection{Finite Blaschke products}
	We first recall the notion of a continuous argument.
	Let $S\subseteq\mathbb C$ and let
	\[
	\varphi:S\longrightarrow\mathbb C\setminus\{0\}
	\]
	be continuous. A continuous function
	$\alpha:S\longrightarrow\mathbb R$ is called a
	\emph{continuous argument} of $\varphi$ if
	\[
	\varphi(z)=|\varphi(z)|e^{i\alpha(z)}
	\qquad\text{for all }z\in S.
	\]
	We use $\Arg z$ to denote the principal argument of $z$.
	
	Finite Blaschke products  provide a natural analytic tool for studying
	zeros on the unit circle~\cite{GarciaMashreghiRoss2017,MandelRobins2019,Simanek2020}.
	A finite \emph{Blaschke product} of degree $n$ is a rational function
	of the form
	\[
	B(z)=\prod_{k=1}^{n}\frac{z-w_k}{1-\overline{w_k}z},
	\qquad |w_k|<1\quad(1\le k\le n).
	\]
	The zeros are counted with multiplicity; in particular, $w_k=0$ is
	allowed.
	
	We shall use the following standard properties of finite Blaschke
	products.
	
	\begin{lemma}[\cite{GarciaMashreghiRoss2017}]\label{lem:blaschke}
		Let $B$ be a finite Blaschke product of degree $n$. Then
		\[
		|B(e^{it})|=1,
		\qquad t\in\R.
		\]
		If
		$
		B(e^{it})=e^{i\alpha(t)}
		$
		for a continuous argument $\alpha$, then $\alpha$ is strictly increasing
		and
		\[
		\alpha(2\pi)-\alpha(0)=2\pi n.
		\]
	\end{lemma}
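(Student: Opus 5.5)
The lemma concerns a finite Blaschke product
\[
B(z)=\prod_{k=1}^{n}\frac{z-w_k}{1-\overline{w_k}z},\qquad |w_k|<1 .
\]
The plan is to reduce everything to a single factor
\[
b_w(z)=\frac{z-w}{1-\overline{w}z},\qquad |w|<1 ,
\]
and then add arguments, since $B=\prod_k b_{w_k}$.

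\textbf{Modulus.} For $z=e^{it}$ we have $\overline{z}=1/z$. Hence
\[
1-\overline{w}z=z\bigl(\overline{z}-\overline{w}\bigr)=z\,\overline{(z-w)} .
\]
Taking moduli gives $|1-\overline{w}z|=|z-w|$. Since $|z-w|\neq 0$ because $|w|<1$, it follows that $|b_w(e^{it})|=1$. Multiplying over $k$ gives $|B(e^{it})|=1$.

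\textbf{Monotonicity.} Each $b_w(e^{it})$ is smooth in $t$ and never zero, so it has a smooth continuous argument $\alpha_w(t)$. Then $\alpha=\sum_k\alpha_{w_k}$ is a continuous argument of $B(e^{it})$. Any other continuous argument differs from this one by a constant multiple of $2\pi$, because the difference is continuous and takes values in $2\pi\mathbb Z$. So it suffices to treat the single factor.

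Differentiating $\log b_w(e^{it})$ gives
\[
\alpha_w'(t)=\operatorname{Re}\!\Big(\frac{e^{it}}{e^{it}-w}+\frac{\overline{w}e^{it}}{1-\overline{w}e^{it}}\Big).
\]
The key step is to simplify this to the Poisson kernel. Using $1-\overline{w}e^{it}=e^{it}\,\overline{(e^{it}-w)}$, a direct computation should give
\[
\alpha_w'(t)=\frac{1-|w|^2}{|e^{it}-w|^2}>0 .
\]
This expression is strictly positive, so each $\alpha_w$ is strictly increasing, and therefore so is their sum $\alpha$. Getting this identity right is the only computational obstacle. As a cross-check, $b_w$ maps the unit circle to itself, and at $w=0$ the formula reduces to $\alpha'=1$, as it should.

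\textbf{Total increment.} The increment $\alpha(2\pi)-\alpha(0)$ equals $2\pi$ times the winding number of $B(e^{it})$ about $0$. By the argument principle, this winding number is the number of zeros minus the number of poles of $B$ inside the unit disk. The zeros $w_k$ all lie inside, giving $n$ zeros counted with multiplicity. The poles $1/\overline{w_k}$ lie outside the closed disk, or at infinity when $w_k=0$, so there are none inside. Hence $\alpha(2\pi)-\alpha(0)=2\pi n$.

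An alternative route for a single factor is to integrate the Poisson kernel over $[0,2\pi]$, which gives $2\pi$, and then sum over the $n$ factors.
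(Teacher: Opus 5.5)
Your proof is correct, and the computation you left as ``should give'' does go through: for $|z|=1$ one has $\frac{z}{z-w}+\frac{\overline{w}z}{1-\overline{w}z}=\frac{z(1-|w|^2)}{(z-w)(1-\overline{w}z)}=\frac{1-|w|^2}{|z-w|^2}$, using $(z-w)(1-\overline{w}z)=z|z-w|^2$. The paper gives no proof of its own and simply cites the Garcia--Mashreghi--Ross survey. Your route is the standard argument behind that citation: unimodularity on the circle, $\alpha'(t)=\sum_k(1-|w_k|^2)/|e^{it}-w_k|^2>0$, and the argument principle (or integrating the Poisson kernel) for the total increment $2\pi n$.
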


	\subsection{Half-polynomial decomposition}
	We now present a half-polynomial decomposition of a palindromic polynomial.
	Let
	\[
	f(z)=\sum_{k=0}^{n}a_kz^k
	\]
	be palindromic. If $n=2m$, then
	\[
	\begin{aligned}
		f(z)
		={}&\left(a_0+\cdots+a_{m-1}z^{m-1}+\frac{a_m}{2}z^m\right)
		+\left(\frac{a_m}{2}z^m+a_{m-1}z^{m+1}+\cdots+a_0z^{2m}\right).
	\end{aligned}
	\]
	If $n=2m+1$, then
	\[
	\begin{aligned}
		f(z)
		={}&\left(a_0+\cdots+a_{m-1}z^{m-1}+a_mz^m\right)
		+\left(a_mz^{m+1}+a_{m-1}z^{m+2}+\cdots+a_0z^{2m+1}\right).
	\end{aligned}
	\]
	Define the \emph{half-polynomial} of $f(z)$ by
	\begin{equation}\label{eq:half}
		H[f](z)=
		\begin{cases}
			\displaystyle \frac{a_m}{2}+a_{m-1}z+\cdots+a_0z^m,
			&n=2m,\\[6pt]
			\displaystyle a_m+a_{m-1}z+\cdots+a_0z^m,
			&n=2m+1.
		\end{cases}
	\end{equation}
	For a real polynomial $g$ of degree $n$, define its
	\emph{reversed polynomial} by
	\begin{equation}\label{eq:star}
		g^*(z):=z^ng\left(\frac1z\right).
	\end{equation}
	Thus $g^*=g$ if and only if $g$ is palindromic.
	With this notation, the half-polynomial decomposition takes the form
	\begin{equation}\label{eq:half-decomp}
		f(z)=
		\begin{cases}
			\bigl(H[f]\bigr)^*(z)+z^mH[f](z),&n=2m,\\[4pt]
			\bigl(H[f]\bigr)^*(z)+z^{m+1}H[f](z),&n=2m+1.
		\end{cases}
	\end{equation}
	
	We first recall the Enestr\"om--Kakeya theorem, which will be applied to the half-polynomial of \(f^\#\).
	A polynomial is called \emph{Schur stable} if all its zeros lie in $\{z\in\C:|z|<1\}$.
	\begin{lemma}[\cite{GardnerGovil2014}]\label{lem:enestrom}
		Let
		$
		g(z)=\sum_{k=0}^{n}c_kz^k
		$
		be a polynomial with real coefficients. If
		\[
		0<c_0<c_1<\cdots<c_n,
		\]
		then $g$ is Schur stable.
	\end{lemma}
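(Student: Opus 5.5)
The plan is the classical multiplication by $1-z$, followed by the triangle inequality on $|z|\ge1$. I would first show that $g$ has no zeros in $|z|>1$, and then rule out zeros on $|z|=1$ using the strictness of the inequalities $c_0<c_1<\cdots<c_n$.

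Set
\[
F(z)=(1-z)g(z)=c_0+\sum_{k=1}^{n}(c_k-c_{k-1})z^k-c_nz^{n+1}.
\]
The hypothesis says that $c_0>0$ and $c_k-c_{k-1}>0$ for $1\le k\le n$. These numbers telescope: $c_0+\sum_{k=1}^n(c_k-c_{k-1})=c_n$. Every zero of $g$ is a zero of $F$, so it suffices to locate the zeros of $F$ other than $z=1$.

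Take $|z|\ge1$. By the triangle inequality and positivity of the coefficients,
\[
\Bigl|c_0+\sum_{k=1}^{n}(c_k-c_{k-1})z^k\Bigr|
\le c_0+\sum_{k=1}^{n}(c_k-c_{k-1})|z|^k
\le |z|^n\Bigl(c_0+\sum_{k=1}^{n}(c_k-c_{k-1})\Bigr)=c_n|z|^n .
\]
If $|z|>1$, the right-hand side is strictly less than $c_n|z|^{n+1}=|c_nz^{n+1}|$. Hence $F(z)\neq0$, and so $g$ has no zeros with $|z|>1$.

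The main step is the boundary case $|z|=1$. Suppose $F(z)=0$ with $|z|=1$. Then both inequalities above are equalities, since the first sum has modulus exactly $c_n=|c_nz^{n+1}|$. Equality in the triangle inequality forces the nonzero complex numbers $c_0$ and $(c_k-c_{k-1})z^k$, $1\le k\le n$, to have a common argument. This is exactly where strict monotonicity is used: every difference $c_k-c_{k-1}$ is nonzero, so no term can drop out. Because $c_0>0$, we get $z^k>0$ for every $1\le k\le n$; in particular $z>0$, so $z=1$ (and $n\ge1$ is needed here). But $g(1)=\sum_k c_k>0$, so $z=1$ is a zero of the factor $1-z$ and not of $g$. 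Therefore $g$ has no zeros on the unit circle.

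Combining the two cases, all zeros of $g$ lie in $\{|z|<1\}$, i.e. $g$ is Schur stable.
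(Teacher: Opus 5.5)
Your proof is correct and complete. It is the classical $(1-z)$-multiplication argument for the Enestr\"om--Kakeya theorem, and it uses strict monotonicity in exactly the right place, namely to exclude zeros on $|z|=1$ other than $z=1$. The paper gives no proof of its own for this lemma and simply cites Gardner--Govil, so there is nothing further to compare. The case $n=0$, where $g$ is a positive constant with no zeros, is vacuous.
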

	We apply this result to the half-polynomial of $f^{\#}$.
	\begin{lemma}\label{lem:half-stable}
		Let $f\in\mathcal P_n\cap\RZ$. Then $H[f^{\#}]$ is Schur stable.
		Moreover, all zeros of $\bigl(H[f^{\#}]\bigr)^*$ lie in
		$\{z:|z|>1\}$.
	\end{lemma}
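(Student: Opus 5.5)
We will read off the coefficients of $H[f^{\#}]$ and apply Lemma~\ref{lem:enestrom} directly. Write $f(z)=\sum_{k=0}^n a_kz^k\in\mathcal P_n\cap\RZ$. Then $f^{\#}(z)=\sum_{k=0}^n z^k/a_k$, which is again palindromic with positive coefficients. Set $b_k=1/a_k$ and let $m=\lfloor n/2\rfloor$. By \eqref{eq:half},
\[
H[f^{\#}](z)=c_0+c_1z+\cdots+c_mz^m,\qquad c_j=b_{m-j}\ (1\le j\le m),
\]
with $c_0=b_m/2$ if $n=2m$ and $c_0=b_m$ if $n=2m+1$. The plan is to check that $0<c_0<c_1<\cdots<c_m$, then invoke Lemma~\ref{lem:enestrom} and finish by reciprocity of zeros.

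\textbf{The coefficient chain.} By \eqref{eq:coefincrease}, which follows from the Newton inequalities \eqref{Newton} and palindromicity, we have $0<a_0<a_1<\cdots<a_m$. Taking reciprocals gives
\[
b_m<b_{m-1}<\cdots<b_0,
\]
so $c_1<c_2<\cdots<c_m$ in both parity cases.

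The only delicate point is the first inequality $c_0<c_1$, and it splits by parity.
\begin{itemize}
\item If $n=2m$, then $c_0=b_m/2<b_m$. Also $b_m\le b_{m-1}=c_1$, since $a_{m-1}<a_m$ for $m\ge1$. Hence $c_0<c_1$.
\item If $n=2m+1$, then $c_0=b_m$ and $c_1=b_{m-1}$. Strictness $a_{m-1}<a_m$ is again part of \eqref{eq:coefincrease}, since $m=\lfloor n/2\rfloor$ and $m\ge1$. Hence $c_0<c_1$.
\end{itemize}

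\textbf{Degenerate cases.} If $m=0$ (i.e.\ $n\le1$), then $H[f^{\#}]$ is a nonzero constant, and Schur stability holds vacuously. If $m\ge1$, Lemma~\ref{lem:enestrom} applies and every zero of $H[f^{\#}]$ lies in $|z|<1$.

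\textbf{The reversed polynomial.} Since $c_0\neq 0$, $H[f^{\#}]$ has exact degree $m$ and does not vanish at $0$. The zeros of $\bigl(H[f^{\#}]\bigr)^*(z)=z^mH[f^{\#}](1/z)$ are therefore exactly the reciprocals $1/w$ of the zeros $w$ of $H[f^{\#}]$. Each such $w$ satisfies $0<|w|<1$, so every zero of $\bigl(H[f^{\#}]\bigr)^*$ has modulus greater than $1$. This is the second claim.

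The main point to watch is the first coefficient comparison in the odd-degree case. It relies on strictness of \eqref{eq:coefincrease} at the middle index, which the Newton inequalities guarantee.
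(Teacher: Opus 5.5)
Your proof is correct and follows the paper's argument: you use \eqref{eq:coefincrease} to get strictly increasing coefficients of $H[f^{\#}]$, apply Lemma~\ref{lem:enestrom}, and pass to $\bigl(H[f^{\#}]\bigr)^*$ by inverting zeros. You are slightly more explicit than the paper about the factor $\tfrac12$ in the even case and the trivial case $m=0$. One small misattribution: the exact degree $m$ of $H[f^{\#}]$ comes from $c_m=b_0\neq0$, while $c_0\neq0$ is what rules out a zero at the origin.
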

	
	\begin{proof}
		Let
		$
		f(z)=\sum_{k=0}^{n}a_kz^k.
		$
		By \eqref{eq:coefincrease},
		\[
		0<a_0<a_1<\cdots<a_{\lfloor n/2\rfloor}.
		\]
		Write
		\[
		f^{\#}(z)=\sum_{k=0}^{n}b_kz^k,
		\qquad b_k=\frac1{a_k}.
		\]
		Then
		\[
		b_0>b_1>\cdots>b_{\lfloor n/2\rfloor}>0.
		\]
		Hence, by \eqref{eq:half}, the coefficients of $H[f^{\#}]$ in ascending powers of $z$ are strictly increasing.
		It follows from Lemma~\ref{lem:enestrom} that $H[f^{\#}]$ is
		Schur stable.
		
		If $w\ne 0$ is a zero of $H[f^{\#}]$, then $1/w$ is a zero of
		$\bigl(H[f^{\#}]\bigr)^*$ by~\eqref{eq:star}. Since $|w|<1$, all zeros of
		$\bigl(H[f^{\#}]\bigr)^*$ lie in $\{z:|z|>1\}$.
	\end{proof}
	
	

	\section{Proof of Theorem \ref{thm:main} }\label{proof of thm}
	In this section, we will prove Theorem~\ref{thm:main}.
	The case $n=1$ of Theorem~\ref{thm:main} is immediate.
	Hence, in what follows, we assume that $n\ge2$.
	We first establish the estimates needed for the proof.

	\subsection{Coefficient estimates}
	\begin{lemma}\label{lem:ratio}
		Suppose
		$
		f(z)=\sum_{k=0}^{n}a_kz^k\in\mathcal P_n\cap\RZ
		$
		with $a_0=1.$
		Let $m=\lfloor n/2\rfloor$ and
		\[
		\bigl(H[f^{\#}]\bigr)^*(z)=\sum_{k=0}^{m}c_kz^k.
		\]
		Then
		$
		c_0=1,$ and $c_1\le1/n.
		$
		For $2\le k\le m$, we have
		\[
		\frac{c_{k-1}}{c_k}\ge\frac{k}{k-1}.
		\]
		In particular, if $m\ge2$, then
		\[
		c_1\ge2c_2\ge\cdots\ge mc_m>0.
		\]
	\end{lemma}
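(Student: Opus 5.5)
The plan is to identify the coefficients $c_k$ explicitly and then read each claimed inequality off Newton's inequalities~\eqref{Newton}, using the symmetry of the coefficient sequence.

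\textbf{Identifying the $c_k$.} Write $b_k=1/a_k$. By \eqref{eq:half} and \eqref{eq:star},
\[
\bigl(H[f^{\#}]\bigr)^*(z)=b_0+b_1z+\cdots+b_{m-1}z^{m-1}+\beta b_m z^m,
\]
where $\beta=\tfrac12$ if $n=2m$ and $\beta=1$ if $n=2m+1$. Hence $c_k=1/a_k$ for $0\le k<m$ and $c_m=\beta/a_m$. In particular $c_0=1/a_0=1$.

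\textbf{The bound $c_1\le 1/n$.} Since $f\in\RZ$ has negative zeros and $a_0=1$, we can write $f(z)=\prod_{i=1}^n(1+z/s_i)$ with $s_i>0$. Palindromicity gives $a_n=a_0=1$, so $\prod_i s_i^{-1}=1$. By AM--GM,
\[
a_1=\sum_i \frac{1}{s_i}\ge n .
\]
Thus $c_1\le 1/a_1\le 1/n$; this covers $m=1$ too, where the factor $\beta\le1$ only helps.

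\textbf{The ratio bounds.} For $2\le k\le m$ we have
\[
\frac{c_{k-1}}{c_k}=\frac{a_k}{a_{k-1}}\quad (k<m),\qquad
\frac{c_{m-1}}{c_m}=\frac{1}{\beta}\,\frac{a_m}{a_{m-1}} .
\]
The key tool is that Newton's inequalities \eqref{Newton} say exactly that the normalized sequence $\tilde a_k=a_k/\binom nk$ is log-concave. This sequence is symmetric because $f$ is palindromic. A symmetric log-concave positive sequence is nondecreasing up to its middle, so $\tilde a_k\ge\tilde a_{k-1}$ for $k\le n/2$. Therefore
\[
\frac{a_k}{a_{k-1}}\ge\frac{\binom nk}{\binom n{k-1}}=\frac{n-k+1}{k},
\]
and it suffices to check that $(n-k+1)(k-1)\ge k^2$, or the analogous bound including the factor $1/\beta$.

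\textbf{Main obstacle: the boundary case.} The crude estimate $n-k+1\ge k+1$ gives only $(k+1)(k-1)=k^2-1$, which falls just short. So I split into cases.
\begin{itemize}
\item If $k\le m-1$, then $n\ge 2m\ge 2k+2$, so $n-k+1\ge k+3$. This gives $(k+3)(k-1)=k^2+2k-3\ge k^2$ for $k\ge2$.
\item If $k=m$ and $n=2m+1$, then $\beta=1$ and $n-m+1=m+2$. This gives $(m+2)(m-1)=m^2+m-2\ge m^2$ for $m\ge2$.
\item If $k=m$ and $n=2m$, then $1/\beta=2$, and the bound becomes $2(m+1)/m\ge m/(m-1)$. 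This holds since $2(m+1)(m-1)\ge m^2$ for $m\ge2$.
\end{itemize}
In every case $c_{k-1}/c_k\ge k/(k-1)$.

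\textbf{Conclusion.} Rewriting the ratio bound as $(k-1)c_{k-1}\ge kc_k$ and chaining over $k=2,\dots,m$ gives $c_1\ge 2c_2\ge\cdots\ge mc_m$. Finally $mc_m>0$ because every $a_k>0$.
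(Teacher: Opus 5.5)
Your proposal is correct and follows the paper's proof essentially step for step. Both arguments identify the $c_k$ in the same way and use AM--GM on the reciprocal zeros to get $a_1\ge n$. Both then use the monotonicity, up to the middle, of the symmetric log-concave sequence $a_k/\binom{n}{k}$ coming from Newton's inequalities, which gives $a_k/a_{k-1}\ge (n-k+1)/k$, and both treat the halved middle coefficient separately when $n$ is even. Your explicit check of $(n-k+1)(k-1)\ge k^2$, split into the cases $k\le m-1$ and $k=m$ with $n$ odd, fills in the inequality $(n-k+1)/k\ge k/(k-1)$ that the paper asserts without detail.
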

	
	\begin{proof}
		By \eqref{eq:star} and \eqref{eq:half},
		\[
		\bigl(H[f^{\#}]\bigr)^*(z)=
		\begin{cases}
			\displaystyle
			\frac1{a_0}+\frac1{a_1}z+\cdots+\frac1{a_{m-1}}z^{m-1}
			+\frac1{2a_m}z^m,
			&n=2m,\\[8pt]
			\displaystyle
			\frac1{a_0}+\frac1{a_1}z+\cdots+\frac1{a_{m-1}}z^{m-1}
			+\frac1{a_m}z^m,
			&n=2m+1.
		\end{cases}
		\]
		Since $a_0=1$, we have $c_0=1$.
		
		Since $f\in\mathcal P_n\cap\RZ$, all zeros of $f$ are negative. Hence
		\[
		f(z)=\prod_{j=1}^{n}(z+r_j),\qquad r_j>0.
		\]
		Since $a_0=1$, we have
		$
		\prod_{j=1}^{n}r_j=1.
		$
		Therefore, by the arithmetic--geometric mean inequality,
		\[
		a_1=\sum_{j=1}^{n}\frac1{r_j}
		\ge n\left(\prod_{j=1}^{n}\frac1{r_j}\right)^{1/n}=n.
		\]
		Hence $c_1\le 1/n$. Indeed, $c_1=1/a_1$ for $n\ge3$, while for
		$n=2$ we have
		\[
		c_1=\frac{1}{2a_1}\le\frac14<\frac12.
		\]
		
		By Newton's inequalities, the normalized coefficients
		\[
		\frac{a_k}{\binom{n}{k}},\qquad 0\le k\le n,
		\]
		form a log-concave sequence.
		Since $f$ is palindromic, this sequence is symmetric and hence nondecreasing up to the middle. Therefore,
		\[
		\frac{a_k}{a_{k-1}}
		\ge
		\frac{\binom{n}{k}}{\binom{n}{k-1}}
		=
		\frac{n-k+1}{k},
		\qquad
		1\le k\le m.
		\]
		If $2\le k\le m$ and $k$ is not the middle index when $n$ is even, then
		\[
		\frac{c_{k-1}}{c_k}
		=\frac{a_k}{a_{k-1}}
		\ge\frac{n-k+1}{k}
		\ge\frac{k}{k-1}.
		\]
		If $n=2m$ and $k=m$, then
		\[
		\frac{c_{m-1}}{c_m}
		=\frac{2a_m}{a_{m-1}}
		\ge\frac{2(m+1)}m
		\ge\frac{m}{m-1}.
		\]
		Thus
		\[
		\frac{c_{k-1}}{c_k}\ge\frac{k}{k-1},
		\qquad 2\le k\le m,
		\]
		which is equivalent to
		\[
		c_1\ge2c_2\ge\cdots\ge mc_m>0.
		\]
		This completes the proof.
	\end{proof}

	\subsection{A sector bound}
	Define
	\begin{equation}\label{eq:Bf}
		\mathcal B_f(z):=
		z^{\lceil n/2\rceil}
		\frac{H[f^{\#}](z)}{\bigl(H[f^{\#}]\bigr)^*(z)},
		\qquad f\in\mathcal P_n\cap\RZ.
	\end{equation}
	By Lemma~\ref{lem:half-stable}, $\mathcal B_f$ is a finite Blaschke
	product of degree $n$. Moreover, \eqref{eq:half-decomp} gives
	\begin{equation}\label{eq:root-level}
		f^{\#}(z)=
		\bigl(H[f^{\#}]\bigr)^*(z)
		\bigl(1+\mathcal B_f(z)\bigr).
	\end{equation}
	Since $\bigl(H[f^{\#}]\bigr)^*(z)$ has no zeros on the unit circle by Lemma~\ref{lem:half-stable},
	for $0\le t<2\pi$, $f^{\#}(e^{it})=0$ if and only if
	$\mathcal B_f(e^{it})=-1.$
	We next use Lemma~\ref{lem:ratio} to obtain the following sector bound.
	\begin{lemma}\label{lem:sector}
		Let $f\in\mathcal P_n\cap\RZ$ be monic. Then
		\begin{equation}\label{eq:sector}
			0<\Arg\bigl((H[f^{\#}])^*(e^{it})\bigr)<\frac t2,
			\qquad 0<t<\pi.
		\end{equation}
	\end{lemma}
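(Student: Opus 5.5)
Write $G(z):=(H[f^{\#}])^*(z)=\sum_{k=0}^m c_kz^k$ with $m=\lfloor n/2\rfloor$. After dividing $f$ by $a_0$ (which rescales $f^{\#}$ by a positive constant and leaves arguments unchanged), the coefficients satisfy $c_0=1$, $c_1\le 1/n$ and $c_1\ge 2c_2\ge\cdots\ge mc_m>0$ by Lemma~\ref{lem:ratio}. The two inequalities in \eqref{eq:sector} will be proved separately by sign conditions.

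For the lower bound, we show that $\operatorname{Re}G(e^{it})>0$ and $\operatorname{Im}G(e^{it})>0$ for $0<t<\pi$. Together these put the principal argument in $(0,\pi/2)$.

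\emph{Real part.} We have $\operatorname{Re}G(e^{it})\ge 1-\sum_{k\ge1}c_k\ge 1-c_1\sum_{k=1}^m\frac1k\ge 1-\frac{H_m}{n}$, where $H_m$ is the $m$-th harmonic number. Since $H_m\le m<n$ for $m\ge1$, this is positive.

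\emph{Imaginary part.} Set $b_k:=kc_k$, which is nonincreasing and positive. Then
\[
\operatorname{Im}G(e^{it})=\sum_{k=1}^m b_k\frac{\sin kt}{k}.
\]
Let $S_j(t)=\sum_{k=1}^j \frac{\sin kt}{k}$. Abel summation gives
\[
\operatorname{Im}G(e^{it})=\sum_{j=1}^{m-1}(b_j-b_{j+1})S_j(t)+b_mS_m(t).
\]
By the Fej\'er--Jackson--Gronwall inequality, $S_j(t)>0$ on $(0,\pi)$. The coefficients $b_j-b_{j+1}$ are nonnegative and $b_m>0$, so the sum is positive. This is the only step that uses an external classical inequality, and I expect it to be the main point to justify. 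If one prefers a self-contained argument, one can fall back on the elementary proof of Fej\'er--Jackson.

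For the upper bound, note that $\Arg G(e^{it})<t/2$ is equivalent to $\operatorname{Im}\bigl(e^{-it/2}G(e^{it})\bigr)<0$, since the argument already lies in $(0,\pi/2)$ and $t/2\in(0,\pi/2)$. Expanding,
\[
\operatorname{Im}\bigl(e^{-it/2}G(e^{it})\bigr)=-\sin\frac t2+\sum_{k=1}^m c_k\sin\Bigl(\bigl(k-\tfrac12\bigr)t\Bigr).
\]
Using $|\sin(jx)|\le j|\sin x|$ with $x=t/2$ and $j=2k-1$, we get
\[
\Bigl|\sum_{k=1}^m c_k\sin\bigl((k-\tfrac12)t\bigr)\Bigr|\le \sin\frac t2\sum_{k=1}^m(2k-1)c_k .
\]
Moreover,
\[
\sum_{k=1}^m(2k-1)c_k<\sum_{k=1}^m 2kc_k\le 2mc_1\le \frac{2m}{n}\le1 .
\]
The middle step uses $kc_k\le c_1$, and the first inequality is strict because $c_k>0$. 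Hence the expression is strictly negative, which gives $\Arg G(e^{it})<t/2$ and completes the proof of \eqref{eq:sector}.
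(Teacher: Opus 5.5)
Your proof is correct and follows essentially the same route as the paper: Lemma~\ref{lem:ratio} supplies the coefficient chain, the real part is shown positive, Abel summation against the Fej\'er sums $S_j(t)>0$ handles the imaginary part, and the upper bound comes from $|\sin((2k-1)t/2)|\le(2k-1)\sin(t/2)$ together with $\sum_{k=1}^m(2k-1)c_k<1$. The differences are only cosmetic: you bound the real part by $1-H_m/n$ where the paper uses $1-m/n\ge 1/2$, and you phrase the upper bound through $\operatorname{Im}\bigl(e^{-it/2}G(e^{it})\bigr)<0$ where the paper uses $R(t)\sin(t/2-\vartheta(t))>0$.
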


	\begin{proof}
		Write
		\[
		\bigl(H[f^{\#}]\bigr)^*(z)=\sum_{k=0}^{m}c_kz^k,
		\qquad m=\left\lfloor\frac n2\right\rfloor.
		\]
		By Lemma~\ref{lem:ratio},
		\begin{equation}\label{eq:ck-chain}
			\frac1n\ge c_1\ge2c_2\ge\cdots\ge mc_m>0.
		\end{equation}
		Hence
		\[
		\sum_{k=1}^{m}c_k\le mc_1\le\frac mn\le\frac12.
		\]
		Since $c_0=1$,
		\[
		\bigl(H[f^{\#}]\bigr)^*(e^{it})
		=1+\sum_{k=1}^{m}c_k(\cos (kt)+i\sin (kt)).
		\]
		Therefore,
		$
		\operatorname{Re}\bigl((H[f^{\#}])^*(e^{it})\bigr)
		\ge1-\sum_{k=1}^{m}c_k\ge\frac12.
		$
		
		For the imaginary part, the classical Fej\'er inequality (see~\cite[Eq.~(1.1)]{BY01}) gives
		\[
		S_j(t):=\sum_{k=1}^{j}\frac{\sin (kt)}{k}>0,
		\qquad j\ge1,\quad 0<t<\pi.
		\]
		Put $d_k=kc_k$ for $1\le k\le m$ and $d_{m+1}=0$.
		By \eqref{eq:ck-chain},
		$d_k-d_{k+1}\ge0$ and $d_m-d_{m+1}>0$.
		Abel summation gives
		\[
		\begin{aligned}
			\operatorname{Im}\bigl((H[f^{\#}])^*(e^{it})\bigr)
			&=\sum_{k=1}^{m}c_k\sin (kt)\\
			&=\sum_{j=1}^{m}(d_j-d_{j+1})S_j(t)>0,
			\qquad 0<t<\pi.
		\end{aligned}
		\]
		Set $R(t) = |(H[f^{\#}])^*(e^{it})|$ and $\vartheta(t) = \Arg ((H[f^{\#}])^*(e^{it}))$. Since both the real and imaginary parts of $(H[f^{\#}])^*(e^{it})$ are positive, we have
		$R(t) > 0 $ and $ 0 < \vartheta(t) < \pi/2$.
		
		It remains to prove the upper bound in \eqref{eq:sector}.
		By \eqref{eq:ck-chain}, $kc_k\le c_1\le1/n$, and hence
		\[
		\sum_{k=1}^{m}(2k-1)c_k
		=2\sum_{k=1}^{m}kc_k-\sum_{k=1}^{m}c_k
		\le\frac{2m}{n}-\sum_{k=1}^{m}c_k<1.
		\]
		Using $|\sin(\ell x)|\le\ell\sin x$ for integers $\ell\ge1$ and
		$0<x<\pi$, we obtain
		\[
		\begin{aligned}
			R(t)\sin\left(\frac t2-\vartheta(t)\right)
			&=\sin\frac t2-
			\sum_{k=1}^{m}c_k\sin\frac{(2k-1)t}{2}\\
			&\ge
			\left(1-\sum_{k=1}^{m}(2k-1)c_k\right)
			\sin\frac t2>0.
		\end{aligned}
		\]
		Since
		\[
		-\frac\pi2<\frac t2-\vartheta(t)<\frac\pi2,
		\]
		we have $t/2-\vartheta(t)>0$. Hence
		\[
		0<\vartheta(t)<\frac t2,
		\]
		which proves \eqref{eq:sector}.
	\end{proof}

	\subsection{Phase separation}
	
	We now use the sector bound in Lemma~\ref{lem:sector} to compare the finite
	Blaschke products associated with polynomials of consecutive degrees.
	The following result shows that these products take distinct
	values at every point of the unit circle except $1$.

	\begin{proposition}\label{prop:noncoincidence}
		Let $p\in\mathcal P_n\cap\RZ$ and
		$q\in\mathcal P_{n+1}\cap\RZ$. Then
		\[
		\mathcal B_p(e^{it})\ne\mathcal B_q(e^{it}),
		\qquad 0<t<2\pi.
		\]
	\end{proposition}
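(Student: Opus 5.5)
The plan is to write both Blaschke products on the unit circle explicitly in terms of the argument $\vartheta$ controlled by Lemma~\ref{lem:sector}, and then compare the resulting phases. First, $\mathcal B_f$ is unchanged when $f$ is multiplied by a positive constant: $f^{\#}$, and hence $H[f^{\#}]$, is multiplied by the reciprocal constant, and the quotient in \eqref{eq:Bf} is scale invariant. So we may assume $p$ and $q$ are monic, and Lemma~\ref{lem:sector} applies to both.

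Write $h=H[f^{\#}]$, with $m=\lfloor n/2\rfloor=\deg h$ and $n=\deg f$. Since $h$ has real coefficients, $h(e^{it})=e^{imt}\,\overline{h^*(e^{it})}$. Put $\vartheta_f(t)=\Arg\bigl(h^*(e^{it})\bigr)$. Because $\lceil n/2\rceil+\lfloor n/2\rfloor=n$, this gives
\[
\mathcal B_f(e^{it})=e^{i\lceil n/2\rceil t}\,\frac{e^{imt}\,\overline{h^*(e^{it})}}{h^*(e^{it})}=e^{i(nt-2\vartheta_f(t))}.
\]
Hence
\[
\frac{\mathcal B_q(e^{it})}{\mathcal B_p(e^{it})}=e^{i\Delta(t)},\qquad \Delta(t)=t-2\vartheta_q(t)+2\vartheta_p(t),
\]
and it suffices to show that $\Delta(t)\notin 2\pi\mathbb Z$ for $0<t<2\pi$.

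For $0<t<\pi$, Lemma~\ref{lem:sector} gives $0<\vartheta_p(t),\vartheta_q(t)<t/2$. Therefore
\[
t-t<\Delta(t)<t+t,\qquad\text{that is,}\qquad 0<\Delta(t)<2t<2\pi,
\]
so $\Delta(t)$ is not a multiple of $2\pi$.

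For $t=\pi$, the value $h^*(-1)$ is real. The proof of Lemma~\ref{lem:sector} shows that its real part is at least $1/2$, so $\vartheta_p(\pi)=\vartheta_q(\pi)=0$ and $\Delta(\pi)=\pi$.

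For $\pi<t<2\pi$, we use conjugation symmetry. Because $\mathcal B_f$ has real coefficients, $\mathcal B_f(e^{i(2\pi-t)})=\overline{\mathcal B_f(e^{it})}$. So an equality $\mathcal B_p=\mathcal B_q$ at $e^{it}$ would force the same equality at $e^{i(2\pi-t)}$ with $0<2\pi-t<\pi$, which is already excluded.

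The only point requiring care is the identity $\mathcal B_f(e^{it})=e^{i(nt-2\vartheta_f(t))}$: we must check that the exponents $\lceil n/2\rceil+m$ add up to exactly $n$ in both parities, and that $H[f^{\#}]$ is used with the correct degree $m$. Once this is in place, the sector bound of Lemma~\ref{lem:sector} does all the remaining work.
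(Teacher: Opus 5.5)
Your proposal is correct and follows the paper's proof: both write $\mathcal B_f(e^{it})$ as $e^{int}\,\overline{G(e^{it})}/G(e^{it})$ with $G=(H[f^{\#}])^*$, use Lemma~\ref{lem:sector} to confine the phase difference $t+2\vartheta_p(t)-2\vartheta_q(t)$ to $(0,2t)$ for $0<t<\pi$, and finish with conjugation symmetry and the value at $t=\pi$. Your identity $\lceil n/2\rceil+\lfloor n/2\rfloor=n$ treats both parity cases at once, whereas the paper works out only the case $\deg p$ even and says the other case is analogous; your scale-invariance remark also makes the reduction to monic $p,q$ explicit.
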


	\begin{proof}
		We first consider the case
		$
		\deg p=2m,\deg q=2m+1.
		$
		Then
		\begin{align*}
			\begin{cases}
				\bigl(H[p^{\#}]\bigl)^*(e^{it})
				=\displaystyle\frac{1}{p_0}
				+\displaystyle\frac{1}{p_1}e^{it}
				+\cdots
				+\displaystyle\frac{1}{p_{m-1}}e^{i(m-1)t}
				+\displaystyle\frac{1}{2p_m}e^{imt},
				\\[6pt]
				\bigl(H[q^{\#}]\bigl)^*(e^{it})
				=\displaystyle\frac{1}{q_0}
				+\displaystyle\frac{1}{q_1}e^{it}
				+\cdots
				+\displaystyle\frac{1}{q_{m-1}}e^{i(m-1)t}
				+\displaystyle\frac{1}{q_m}e^{imt}.
			\end{cases}
		\end{align*}
		
		Write
		\[
		G_p(z):=\bigl(H[p^{\#}]\bigr)^*(z),
		\qquad
		G_q(z):=\bigl(H[q^{\#}]\bigr)^*(z).
		\]
		By \eqref{eq:star}, since both $H[p^{\#}]$ and $H[q^{\#}]$ have degree $m$,
		\[
		G_p(z)=z^mH[p^{\#}](1/z),
		\qquad
		G_q(z)=z^mH[q^{\#}](1/z).
		\]
		Hence, for $z=e^{it}$,
		\[
		G_p(e^{it})
		=e^{imt}H[p^{\#}](e^{-it}),
		\qquad
		G_q(e^{it})
		=e^{imt}H[q^{\#}](e^{-it}).
		\]
		Since $H[p^{\#}]$ and $H[q^{\#}]$ have real coefficients,
		\[
		H[p^{\#}](e^{-it})
		=\overline{H[p^{\#}](e^{it})},
		\qquad
		H[q^{\#}](e^{-it})
		=\overline{H[q^{\#}](e^{it})}.
		\]
		Therefore,
		\[
		H[p^{\#}](e^{it})
		=e^{imt}\overline{G_p(e^{it})},
		\qquad
		H[q^{\#}](e^{it})
		=e^{imt}\overline{G_q(e^{it})}.
		\]
		Hence, by \eqref{eq:Bf},
		\begin{equation}\label{eq:B-boundary}
			\mathcal B_p(e^{it})
			=e^{2mit}\frac{\overline{G_p(e^{it})}}{G_p(e^{it})},
			\qquad
			\mathcal B_q(e^{it})
			=e^{(2m+1)it}\frac{\overline{G_q(e^{it})}}{G_q(e^{it})}.
		\end{equation}

		For $0<t<\pi$, set
		\[
		\alpha(t)=\Arg G_p(e^{it}),
		\qquad
		\beta(t)=\Arg G_q(e^{it}).
		\]
		By Lemma~\ref{lem:sector},
		\[
		0<\alpha(t)<\frac t2,
		\qquad
		0<\beta(t)<\frac t2.
		\]
		Therefore,
		\[
		\frac{\mathcal B_q(e^{it})}{\mathcal B_p(e^{it})}
		=\exp\bigl(i(t+2\alpha(t)-2\beta(t))\bigr).
		\]
		The above inequalities imply
		\[
		0<t+2\alpha(t)-2\beta(t)<2t<2\pi.
		\]
		It follows that
		\[
		\mathcal B_p(e^{it})\ne\mathcal B_q(e^{it}),
		\qquad 0<t<\pi.
		\]
		
		Since $\mathcal B_p$ and $\mathcal B_q$ have real coefficients, the same
		conclusion holds for $\pi<t<2\pi$ by conjugation.
		The case
		$
		\deg p=2m+1, \deg q=2m+2
		$
		follows by the same argument.
		
		It remains to consider $t=\pi$. In this case,
		\[
		\mathcal B_p(-1)=(-1)^n,
		\qquad
		\mathcal B_q(-1)=(-1)^{n+1}.
		\]
		Thus
		\[
		\mathcal B_p(-1)\ne\mathcal B_q(-1),
		\]
		which completes the proof.
	\end{proof}

	\begin{proof}[Proof of Theorem~\ref{thm:main}]
		We first show that all zeros of \(f^\#\) are simple.
		Recall that by~\eqref{eq:Bf},
		$$	\mathcal B_f(z):=
		z^{\lceil n/2\rceil}
		\frac{H[f^{\#}](z)}{\bigl(H[f^{\#}]\bigr)^*(z)},
		\qquad f\in\mathcal P_n\cap\RZ.
		$$
		By~\eqref{eq:Bf} and~\eqref{eq:root-level}, $\mathcal B_f$ is a finite Blaschke product of
		degree $n$,
		and
		\[
		f^\#(z)=\bigl(H[f^{\#}]\bigr)^*(z)\bigl(1+\mathcal B_f(z)\bigr).
		\]
		The equation $\mathcal B_f(z)=-1$ has exactly $n$ distinct
		solutions on the unit circle by Lemma~\ref{lem:blaschke}. Each is a zero of $f^\#$.
		Since $\deg f^\#=n$, these are all its zeros, and they
		are simple.

		It remains to prove that $p^{\#}$ strictly interlaces  $q^{\#}$.
		By \eqref{eq:root-level},
		$p^{\#}(e^{it})=0$ if and only if
		$\mathcal B_p(e^{it})=-1,
		$
		and $q^{\#}(e^{it})=0$ if and only if
		$\mathcal B_q(e^{it})=-1.
		$
		
		Choose continuous arguments
		\[
		\mathcal B_p(e^{it})=e^{i\phi_p(t)},
		\qquad
		\mathcal B_q(e^{it})=e^{i\phi_q(t)},
		\]
		normalized by
		$
		\phi_p(0)=\phi_q(0)=0.
		$
		By the discussion following \eqref{eq:Bf}, $\mathcal B_p$ and
		$\mathcal B_q$ are finite Blaschke products of degrees $n$ and $n+1$,
		respectively. Hence, by Lemma~\ref{lem:blaschke}, $\phi_p$ and $\phi_q$
		are strictly increasing and
		\[
		\phi_p(2\pi)=\phi_p(2\pi)-\phi_p(0)=2\pi n,\qquad
		\phi_q(2\pi)=\phi_q(2\pi)-\phi_q(0)=2\pi(n+1).
		\]
		Proposition~\ref{prop:noncoincidence} gives
		\[
		\phi_q(t)-\phi_p(t)\notin2\pi\mathbb Z,
		\qquad 0<t<2\pi.
		\]
		Moreover,
		\[
		\phi_q(0)-\phi_p(0)=0,
		\qquad
		\phi_q(2\pi)-\phi_p(2\pi)=2\pi.
		\]
		Since $\phi_q-\phi_p$ is continuous, it follows that
		\begin{equation}\label{eq:phase-separation}
			0<\phi_q(t)-\phi_p(t)<2\pi,
			\qquad 0<t<2\pi.
		\end{equation}
		
		Let
		\[
		p^{\#}(e^{i\theta_k})=0, \qquad q^{\#}(e^{i\tau_k})=0,
		\]
		where the arguments are listed in increasing order. By \eqref{eq:root-level},
		\[
		\phi_p(\theta_k)=(2k-1)\pi,
		\qquad
		\phi_q(\tau_k)=(2k-1)\pi.
		\]
		Taking $t=\theta_k$ in \eqref{eq:phase-separation}, we obtain
		\[
		\phi_q(\tau_k)=(2k-1)\pi
		=\phi_p(\theta_k)
		<\phi_q(\theta_k)
		<\phi_p(\theta_k)+2\pi
		=(2k+1)\pi
		=\phi_q(\tau_{k+1}).
		\]
		Since $\phi_q$ is strictly increasing,
		\[
		\tau_k<\theta_k<\tau_{k+1},
		\qquad 1\le k\le n.
		\]
		Hence
		\[
		0<\tau_1<\theta_1<\tau_2<\cdots
		<\tau_n<\theta_n<\tau_{n+1}<2\pi.
		\]
		This completes the proof.
	\end{proof}

	\begin{remark}
		The assumptions that $p(z)$ and $q(z)$ have only negative real zeros in Theorem~\ref{thm:main} can be replaced
		by a weaker coefficient condition. More precisely, let
		\[
		p(z)=\sum_{k=0}^{n}p_kz^k\in\mathcal{P}_n,
		\qquad
		q(z)=\sum_{k=0}^{n+1}q_kz^k\in\mathcal{P}_{n+1},
		\]
		where $n\ge 1$, and suppose that the two normalized coefficient
		sequences
		\[
		\left(\frac{p_k}{\binom{n}{k}}\right)_{k=0}^{n}
		\qquad\text{and}\qquad
		\left(\frac{q_k}{\binom{n+1}{k}}\right)_{k=0}^{n+1}
		\]
		are log-concave. Then all zeros of $p^\#$ and $q^\#$ are
		simple and lie on the unit circle, and $p^\#$ strictly
		interlaces $q^\#$.
		
		Indeed, let
		$f(z)=\sum_{k=0}^{n}a_kz^k\in\mathcal{P}_n$, where $n\ge2$,
		and assume that its normalized coefficient sequence is log-concave.
		By scaling, we may suppose that $a_0=a_n=1$.
		The normalized coefficient sequence is symmetric and log-concave,
		and hence is nondecreasing up to the middle. Thus
		\[
		a_1\ge n,
		\qquad
		\frac{a_k}{a_{k-1}}
		\ge \frac{n-k+1}{k}>1,
		\qquad
		1\le k\le \left\lfloor\frac n2\right\rfloor.
		\]
		Consequently, the conclusions of Lemmas~2.3 and~3.1 remain valid.
		The subsequent sector and phase arguments apply unchanged.
		The case $n=1$ follows directly.
	\end{remark}
	
	\section{Applications}\label{sec:applications}
	In this section, we derive some applications of Theorem~\ref{thm:main}.
	
	\subsection{Classical families of polynomials with only real zeros}
	
	Many classical polynomial families arising in enumerative combinatorics
	have positive palindromic coefficients and only negative real zeros.
	We consider here three standard examples: the binomial polynomials
	$B_n(z)$, the Eulerian polynomials $E_n(z)$, and the Narayana polynomials
	$\mathcal N_n(z)$, namely,
	\[
	\begin{aligned}
		B_n(z)&=(1+z)^n,\\
		E_n(z)&=\sum_{k=0}^{n}
		\left\langle {n+1\atop k}\right\rangle z^k,\\
		\mathcal N_n(z)
		&=\sum_{k=0}^{n}
		\frac{1}{n+1}
		\binom{n+1}{k+1}
		\binom{n+1}{k}z^k,
	\end{aligned}
	\]
	where $n\geq0$ and $\left\langle {n+1\atop k}\right\rangle$ denotes the Eulerian number counting permutations of $[n+1]$ with $k$ descents.
	The reciprocal binomial coefficients have also been studied extensively,
	in particular from the viewpoint of identities, sums, and generating
	functions; see, for example, \cite{KruchininKruchinin2026} and the
	references therein.
	
	For $n\ge 1$, the polynomial $B_n$ has the single zero $-1$, with multiplicity $n$.
	It is well known~\cite{LiuWang2007} that the Eulerian and Narayana polynomials have only real zeros. Moreover, each of the three families
	has positive palindromic coefficients, with constant and leading
	coefficients equal to $1$. Hence
	\[
	B_n,\ E_n,\ \mathcal N_n\in\mathcal P_n \cap\RZ.
	\]
	
	Applying Theorem~\ref{thm:main}, we immediately obtain the following
	consequence.
	
	\begin{corollary}\label{cor:classical}
		For every $n\geq1$,
		\begin{enumerate}[(i)]
			\item $B_n^{\#}$ strictly interlaces $B_{n+1}^{\#}$;
			\item $E_n^{\#}$ strictly interlaces $E_{n+1}^{\#}$;
			\item $\mathcal N_n^{\#}$ strictly interlaces
			$\mathcal N_{n+1}^{\#}$.
		\end{enumerate}
	\end{corollary}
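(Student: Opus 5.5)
The corollary is a direct application of Theorem~\ref{thm:main}. For each of the three families and each $n\ge1$, we apply Theorem~\ref{thm:main} with $p$ the degree-$n$ member and $q$ the degree-$(n+1)$ member. So the whole plan is to check that $B_n,E_n,\mathcal N_n\in\mathcal P_n\cap\RZ$ for all $n\ge1$, and likewise at degree $n+1$.

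\textbf{Step 1: membership in $\mathcal P_n$.} We verify that each family has degree exactly $n$, positive coefficients, and palindromic coefficients.
\begin{itemize}
\item For $B_n$, this follows from $\binom nk=\binom n{n-k}>0$.
\item For $E_n$, it follows from the Eulerian symmetry $\left\langle {n+1\atop k}\right\rangle=\left\langle {n+1\atop n-k}\right\rangle$. This comes from the bijection reversing a permutation of $[n+1]$, which exchanges descents and ascents. Positivity holds for $0\le k\le n$, since for each such $k$ some permutation of $[n+1]$ has exactly $k$ descents.
\item For $\mathcal N_n$, the coefficient $\frac1{n+1}\binom{n+1}{k+1}\binom{n+1}{k}$ is invariant under $k\mapsto n-k$, because that map swaps the two binomial factors. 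It is positive for $0\le k\le n$.
\end{itemize}
In all three cases the constant and leading coefficients equal $1$, so the degree is exactly $n$.

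\textbf{Step 2: real-rootedness.} $B_n$ has the single zero $-1$ of multiplicity $n$. For $E_n$ and $\mathcal N_n$ we cite the known real-rootedness result~\cite{LiuWang2007}, as the paper does. Combining Steps~1 and~2 gives membership in $\mathcal P_n\cap\RZ$ at every degree $n\ge1$, in particular at both $n$ and $n+1$.

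\textbf{Step 3: conclusion.} Theorem~\ref{thm:main}(ii), applied to $(p,q)=(B_n,B_{n+1})$, $(E_n,E_{n+1})$ and $(\mathcal N_n,\mathcal N_{n+1})$, yields the three strict interlacing statements (i)--(iii). Part~(i) of the theorem additionally guarantees that all zeros are simple and lie on the unit circle, so the orderings in \eqref{eq:interlace} are well defined.

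There is no real obstacle here. The only point needing care is the indexing: $E_n$ is built from Eulerian numbers of $[n+1]$, so it has degree $n$ rather than $n+1$, and the degrees do differ by exactly one as the theorem requires. The substantive input is the cited real-rootedness of the Eulerian and Narayana polynomials, which we take from the literature.
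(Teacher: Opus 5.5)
Your proposal is correct and matches the paper's argument: the paper also checks that $B_n$, $E_n$, $\mathcal N_n$ lie in $\mathcal P_n\cap\RZ$ and applies Theorem~\ref{thm:main} to consecutive degrees. Membership comes from positive palindromic coefficients with constant and leading coefficient $1$, plus real zeros, which are trivial for $B_n$ and cited from \cite{LiuWang2007} for the other two. Your explicit checks of the Eulerian and Narayana symmetries, positivity, and degrees just fill in details the paper states as well known.
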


	We note that Theorem~\ref{thm:main} does not require the two polynomials
	to belong to the same family. Thus, a degree-$n$ polynomial from any one
	of the three families above may also be paired with a degree-$(n+1)$
	polynomial from either of the other two families.
	
	\subsection{A two-step construction from unit-circle polynomials}
	
	It is known~\cite{Pet15} that every real palindromic polynomial $f$ of degree $n$ has a unique $\gamma$-expansion
	\[
	f(z)=\sum_{j=0}^{\lfloor n/2\rfloor}
	\gamma_j z^j(1+z)^{n-2j}.
	\]
	Define the \emph{$\gamma$-sign transform} of $f$ by
	\begin{equation}\label{eq:gamma-sign-transform}
		\mathcal G[f](z)
		=
		\sum_{j=0}^{\lfloor n/2\rfloor}
		(-1)^j\gamma_j z^j(1+z)^{n-2j}.
	\end{equation}
	
	\begin{lemma}\label{lem:unitcircle-to-real}
		Let $f\in\UC$ be a real palindromic polynomial of degree $n$ with
		$f(0)>0$. Then
		\[
		\mathcal G[f]\in\mathcal P_n\cap\RZ.
		\]
	\end{lemma}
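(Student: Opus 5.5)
The plan is to write $f$ and $\mathcal G[f]$ in the symmetric variable $z^{-1/2}(1+z)$, or equivalently $x=z+z^{-1}+2$. For palindromic $f$ of degree $n$ with $\gamma$-expansion $f(z)=\sum_j\gamma_j z^j(1+z)^{n-2j}$, we get
\[
z^{-n/2}f(z)=\sum_{j}\gamma_j\,\bigl(z^{-1/2}(1+z)\bigr)^{n-2j}.
\]
Setting $w=z^{-1/2}(1+z)$, so that $w^2=z+2+z^{-1}$, this reads $z^{-n/2}f(z)=F(w):=\sum_j\gamma_j w^{n-2j}$. The same computation gives $z^{-n/2}\mathcal G[f](z)=\sum_j(-1)^j\gamma_jw^{n-2j}$. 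Up to the factor $i^{n}$, this equals $F(-iw)$, since $(-iw)^{n-2j}=(-i)^n(-1)^{j}w^{n-2j}$. So the sign change in the $\gamma$-coefficients amounts to rotating the variable $w$ by $-i$.

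Next I will locate the zeros of $F$. On the unit circle $z=e^{it}$ we have $w=2\cos(t/2)$, which is real and lies in $[-2,2]$. The map $z\mapsto w^2=z+z^{-1}+2$ is $2$-to-$1$ from $\C\setminus\{0\}$ onto $\C$. Since $f\in\UC$ has all $n$ zeros on the circle, they occur in conjugate pairs $e^{\pm it}$, together with possible zeros at $-1$. Each pair gives the value $w^2=4\cos^2(t/2)\in[0,4]$, and a zero at $-1$ gives $w^2=0$. The polynomial $F(w)$ has degree $n$ and parity $n$, so $F(w)=w^{\epsilon}\Phi(w^2)$ with $\deg\Phi=\lfloor n/2\rfloor$. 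The $n$ zeros of $f$ then account for all zeros of $\Phi(w^2)$ and $w^{\epsilon}$, so every zero $s$ of $\Phi$ is real with $0\le s\le 4$. The delicate point is the multiplicity at $z=-1$: its multiplicity has the parity of $n$, the factor $w^\epsilon$ absorbs one copy when $n$ is odd, and the remaining copies come in pairs, each giving a root $s=0$ of $\Phi$. I would verify this bookkeeping carefully by factoring $f(z)=(1+z)^{\epsilon}\prod_k(z^2-2\cos t_k\,z+1)$, rewriting each quadratic factor as $z\,(w^2-4\cos^2(t_k/2))$, and reading off $\Phi(s)=c\prod_k(s-4\cos^2(t_k/2))$. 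Here $c=\gamma_0=f(0)>0$.

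Then I translate to $\mathcal G[f]$. Its $\gamma$-polynomial is $\tilde F(w)=w^{\epsilon}\tilde\Phi(w^2)$ with $\tilde\Phi(s)=(-1)^{\lfloor n/2\rfloor}\Phi(-s)$, the sign chosen so that the leading coefficient stays $\gamma_0$. Applying the quadratic factorization in reverse,
\[
\mathcal G[f](z)=\gamma_0(1+z)^{\epsilon}\prod_k\bigl((1+z)^2+4\cos^2(t_k/2)\,z\bigr)=\gamma_0(1+z)^{\epsilon}\prod_k\bigl(z^2+(2+4\cos^2(t_k/2))z+1\bigr).
\]
Each quadratic factor has discriminant $(2+a)^2-4=a(a+4)\ge0$, where $a=4\cos^2(t_k/2)\ge0$, so its roots are real. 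Its coefficients are positive, so the roots are negative. Hence $\mathcal G[f]$ is a product of palindromic polynomials with positive coefficients and only real zeros. It is therefore palindromic with positive coefficients, of degree $n$, and has leading coefficient $\gamma_0=f(0)>0$, so $\mathcal G[f]\in\mathcal P_n\cap\RZ$.

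The main obstacle is justifying that the $\gamma$-sign transform is multiplicative. This is needed to pass from $f$ to the factor-by-factor formula above. I will argue it directly: the $\gamma$-expansion of a product of palindromic polynomials of degrees $a$ and $b$ is the product of the expansions in the form $\sum\gamma_j u^j v^{a-2j}$, where $u=z$ and $v=1+z$. The transform is the substitution $u\mapsto -u$ in this two-variable form, which is a ring homomorphism, and uniqueness of the $\gamma$-expansion from \cite{Pet15} makes this well defined. On a single factor $z^2-2\cos t\,z+1=(1+z)^2-(2+2\cos t)z$, the transform gives $(1+z)^2+(2+2\cos t)z$, which matches the formula above since $2+2\cos t=4\cos^2(t/2)$. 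The factor $(1+z)$ is fixed by the transform. This factorwise route avoids the $w$ substitution entirely, and I would present it as the actual proof.
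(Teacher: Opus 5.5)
Your proposal is correct, and the factorwise version you say you would actually present is the paper's proof: factor $f$ as $f(0)(1+z)^{\varepsilon}$ times quadratics $(1+z)^2-(2+2\cos t_k)z$, flip the sign of the $z$-term to get $z^2+(4+2\cos t_k)z+1$, and observe that each such factor has positive coefficients and negative real zeros. Your two-variable argument that $\mathcal G$ is multiplicative ($u\mapsto -u$ together with uniqueness of the $\gamma$-expansion) makes explicit a step the paper only asserts; the one small omission is that you list zeros at $-1$ but not at $z=1$, which are covered by $t_k=0$ because the parity count (nonreal zeros in pairs, multiplicity at $-1$ congruent to $n$ mod $2$) forces their multiplicity to be even.
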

	
	\begin{proof}
		Write
		\[
		n=2m+\varepsilon,
		\qquad
		\varepsilon\in\{0,1\}.
		\]
		Since $f$ has real coefficients and all its zeros lie on the unit circle,
		its nonreal zeros occur in conjugate pairs. Moreover, palindromicity
		implies that the zero $1$, if present, has even multiplicity, while
		$-1$ occurs with odd multiplicity when $n$ is odd. Hence
		\[
		f(z)
		=
		f(0)(1+z)^{\varepsilon}
		\prod_{\ell=1}^{m}
		(1+b_{\ell}z+z^2),
		\qquad
		-2\le b_{\ell}\le2.
		\]
		Each quadratic factor can be written as
		$
		1+b_{\ell}z+z^2
		=
		(1+z)^2-(2-b_{\ell})z
		$.
		Therefore, expanding the product in the $\gamma$-basis and replacing
		each $\gamma_j$ by $(-1)^j\gamma_j$ changes the minus sign in each
		quadratic factor to a plus sign. Thus
		\begin{align*}
			\mathcal G[f](z)
			&=
			f(0)(1+z)^{\varepsilon}
			\prod_{\ell=1}^{m}
			\bigl((1+z)^2+(2-b_{\ell})z\bigr)\\
			&=
			f(0)(1+z)^{\varepsilon}
			\prod_{\ell=1}^{m}
			\bigl(1+(4-b_{\ell})z+z^2\bigr).
		\end{align*}
		Since
		$
		2\le4-b_{\ell}\le6
		$,
		each quadratic factor has two negative real zeros, possibly coinciding
		at $-1$.
		Since $f(0)>0$, all factors have positive coefficients, and their product is palindromic with only real zeros.
	\end{proof}
	
	Combining Lemma~\ref{lem:unitcircle-to-real} with
	Theorem~\ref{thm:main} gives the following two-step construction.
	
	\begin{corollary}\label{cor:two-step}
		Let $f,g\in\UC$ be real palindromic polynomials of degrees $n$ and
		$n+1$, respectively, where $n\ge1$, and suppose that
		$f(0)>0$, $g(0)>0$.
		Then
		\[
		\bigl(\mathcal G[f]\bigr)^{\#},
		\qquad
		\bigl(\mathcal G[g]\bigr)^{\#}
		\in\UC,
		\]
		and
		\[
		\bigl(\mathcal G[f]\bigr)^{\#}
		\quad\text{strictly interlaces}\quad
		\bigl(\mathcal G[g]\bigr)^{\#}.
		\]
	\end{corollary}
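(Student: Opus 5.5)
The plan is to reduce Corollary~\ref{cor:two-step} directly to Theorem~\ref{thm:main} by way of Lemma~\ref{lem:unitcircle-to-real}.

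First apply Lemma~\ref{lem:unitcircle-to-real} to $f$ and to $g$ separately. Since $f\in\UC$ is real palindromic of degree $n$ with $f(0)>0$, we get $\mathcal G[f]\in\mathcal P_n\cap\RZ$. Likewise, $\mathcal G[g]\in\mathcal P_{n+1}\cap\RZ$.

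Next, check the degrees, since Theorem~\ref{thm:main} needs polynomials of exact degrees $n$ and $n+1$ with all coefficients positive. The factorization in the proof of Lemma~\ref{lem:unitcircle-to-real} gives
\[
\mathcal G[f](z)=f(0)(1+z)^{\varepsilon}\prod_{\ell=1}^{m}\bigl(1+(4-b_\ell)z+z^2\bigr).
\]
This is a product of monic-type factors with positive coefficients. Its leading coefficient is therefore $f(0)>0$ and its degree is exactly $n$, which is consistent with membership in $\mathcal P_n$. The same holds for $\mathcal G[g]$ with degree $n+1$.

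Finally, invoke Theorem~\ref{thm:main} with $p=\mathcal G[f]$ and $q=\mathcal G[g]$, which is allowed since $n\ge1$. Part~(i) gives $(\mathcal G[f])^{\#},(\mathcal G[g])^{\#}\in\UC$ with simple zeros. Part~(ii) gives the strict interlacing.

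There is essentially no obstacle. The only point worth a sentence is this degree and positivity check, which ensures that the coefficientwise reciprocals are well defined and of the right degrees; the explicit product form above settles it.
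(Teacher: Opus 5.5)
Your proposal is correct and follows the paper's own route exactly: Lemma~\ref{lem:unitcircle-to-real} places $\mathcal G[f]$ in $\mathcal P_n\cap\RZ$ and $\mathcal G[g]$ in $\mathcal P_{n+1}\cap\RZ$, and Theorem~\ref{thm:main} then yields both the unit-circle property and the strict interlacing. Your extra degree and positivity check is harmless but redundant, since membership in $\mathcal P_n$ and $\mathcal P_{n+1}$ already requires every coefficient up to the top degree to be positive.
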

	
	Notice that no interlacing relation between the zeros of $f$ and $g$
	is required in Corollary~\ref{cor:two-step}. Thus the two-step map
	\[
	f\longmapsto \mathcal G[f]
	\longmapsto \bigl(\mathcal G[f]\bigr)^{\#}
	\]
	produces strictly interlacing unit-circle polynomials from independently
	chosen unit-circle polynomials of consecutive degrees.
	
	\begin{example}
		\label{ex:rogers-szego}
		For $0<t<1$, let
		\[
		R_n(z;t)
		=
		\sum_{k=0}^{n}\binom{n}{k}_{t}z^k,
		\qquad
		\binom{n}{k}_{t}
		=
		\prod_{j=1}^{k}
		\frac{1-t^{\,n-k+j}}{1-t^j},
		\]
		with the endpoint cases defined by continuity. Thus
		\[
		R_n(z;0)=1+z+\cdots+z^n,
		\qquad
		R_n(z;1)=(1+z)^n.
		\]
		The Rogers--Szeg\H{o} polynomials are real and palindromic, and all
		their zeros lie on the unit circle. For $0<t<1$, this follows from
		their relation with the continuous $q$-Hermite polynomials
		\cite{Ismail2009}.
		
		Let
		\[
		\widehat R_n(z;t)=\mathcal G[R_n(z;t)].
		\]
		The Rogers--Szeg\H{o} recurrence
		\[
		R_{n+1}(z;t)
		=
		(1+z)R_n(z;t)
		-
		(1-t^n)zR_{n-1}(z;t)
		\]
		is transformed into
		\begin{equation}\label{eq:RS-transformed-recurrence}
			\widehat R_{n+1}(z;t)
			=
			(1+z)\widehat R_n(z;t)
			+
			(1-t^n)z\widehat R_{n-1}(z;t),
		\end{equation}
		with
		\[
		\widehat R_0(z;t)=1,
		\qquad
		\widehat R_1(z;t)=1+z.
		\]
		
		Define
		\[
		T_n(z;t)
		=
		\bigl(\widehat R_n(z;t)\bigr)^{\#}.
		\]
		Since $R_n(z;t)\in\UC$ for every $t\in[0,1]$,
		Corollary~\ref{cor:two-step} gives, for arbitrary
		$s,t\in[0,1]$,
		\[
		T_n(z;t)
		\quad\text{strictly interlaces}\quad
		T_{n+1}(z;s),
		\qquad n\ge1.
		\]
		In particular, the parameters in two consecutive degrees may be
		chosen independently.
	\end{example}
	
	\begin{remark}
		For a fixed $0<t<1$, the original Rogers--Szeg\H{o} polynomials
		$R_n(z;t)$ and $R_{n+1}(z;t)$ themselves strictly interlace on the
		unit circle, as follows from the interlacing of consecutive continuous
		$q$-Hermite polynomials. The conclusion of Example~\ref{ex:rogers-szego}
		is different: it concerns the transformed family $T_n$ and allows the
		parameters in two consecutive degrees to be chosen independently.
		
		The two endpoints also recover familiar families. When $t=1$,
		\[
		\widehat R_n(z;1)=(1+z)^n,
		\]
		so $T_n(z;1)$ is the reciprocal-binomial polynomial. When $t=0$,
		\[
		\widehat R_{n+1}(z;0)
		=
		(1+z)\widehat R_n(z;0)
		+
		z\widehat R_{n-1}(z;0),
		\]
		which is the recurrence for the symmetric Delannoy row polynomials \cite{WangZhengChen2019}.
		Thus the family $T_n(z;t)$ connects the reciprocal Delannoy and
		reciprocal-binomial families.
	\end{remark}
	
	\begin{example}
		\label{ex:poupard-kreweras}
		
		Let $F_m(z)$ and $K_m(z)$ be defined by
		\[
		F_1(z)=1,
		\qquad
		(z-1)^2F_{m+1}(z)
		=
		(z^{2m+2}+1)F_m(1)-2z^2F_m(z),
		\]
		and
		\[
		K_1(z)=1+z,
		\qquad
		(z-1)^2K_{m+1}(z)
		=
		(z^{2m+3}+1)K_m(1)-2z^2K_m(z),
		\]
		for $m\ge1$.
		These are the Poupard and Kreweras-related polynomials studied by
		Chapoton and Han~\cite{ChapotonHan2020}. Their zeros lie on the unit
		circle, and
		\[
		\deg F_m=2m-2,
		\qquad
		\deg K_m=2m-1.
		\]
		Hence, these two families provide one polynomial of each nonnegative degree.
		
		Set
		\[
		p_{2m-2}(z)
		=
		\mathcal G\!\left[\frac{F_m(z)}{F_m(0)}\right],
		\qquad
		p_{2m-1}(z)
		=
		\mathcal G\!\left[\frac{K_m(z)}{K_m(0)}\right],
		\]
		and define
		\[
		T_n(z)=p_n^{\#}(z).
		\]
		By Lemma~\ref{lem:unitcircle-to-real},
		\[
		p_n\in\mathcal P_n\cap\RZ.
		\]
		Therefore Theorem~\ref{thm:main} gives
		\[
		T_n
		\quad\text{strictly interlaces}\quad
		T_{n+1},
		\qquad n\ge1.
		\]
		Thus the two unit-circle families are combined, through the two-step
		construction, into a single strictly interlacing sequence indexed by
		consecutive degrees.
	\end{example}

	\section{Remarks}\label{Remark}
	In this section, we show that our main theorem also yields a strict
	interlacing result on the critical line through the
	Rodr\'iguez--Villegas transformation.
	
	Polynomials whose zeros lie on the critical line
	\[
	\operatorname{Re} z=-\frac12
	\]
	arise naturally in the study of Hilbert and Ehrhart polynomials; see,
	for example, \cite{HigashitaniKummerMichaelek2017,Kolbl2025}.
	Rodr\'iguez--Villegas \cite{RodriguezVillegas2002} showed that a
	unit-circle zero condition for the numerator of a rational generating
	function forces the zeros of the associated polynomial to lie on a
	critical line.
	
	More precisely, let
	\[
	h(x)=\sum_{k=0}^{n}h_kx^k,
	\qquad h_n\neq0.
	\]
	Define its Rodr\'iguez--Villegas transform by
	\[
	\mathcal R_n[h](z)
	=
	\sum_{k=0}^{n}
	h_k\binom{z+n-k}{n}.
	\]
	Equivalently,
	\[
	\sum_{m\geq0}\mathcal R_n[h](m)x^m
	=
	\frac{h(x)}{(1-x)^{n+1}}.
	\]
	Rodr\'iguez--Villegas proved that if all zeros of $h$ lie on the unit
	circle and $h(1)\neq0$, then all zeros of $\mathcal R_n[h]$ lie on the line $\operatorname{Re}z=-\frac12$.
	Rodr\'iguez further proved that the Rodr\'iguez--Villegas transformation
	preserves strict interlacing from the unit circle to the corresponding critical
	line; see \cite[Theorem~2.1.10]{Rodriguez2010}.
	Interlacing on the line \(\operatorname{Re}z=-1/2\) is defined by ordering the zeros according to their imaginary parts.
	
	Theorem~\ref{thm:main} supplies the required unit-circle interlacing for any independently chosen pair of consecutive degrees in $\mathcal P_n\cap\RZ$ and $\mathcal P_{n+1}\cap\RZ$.
	Since $p^\#(1)>0$ and $q^\#(1)>0$, the nonvanishing
	condition at $1$ is satisfied.
	Thus Rodr\'iguez's result yields the following critical-line counterpart
	of our main theorem.
	
	\begin{corollary}\label{cor:critical-line}
		Let
		\[
		p\in\mathcal P_n\cap\RZ,
		\qquad
		q\in\mathcal P_{n+1}\cap\RZ,
		\]
		where $n\geq1$. Then
		$\mathcal R_n[p^{\#}]$
		strictly interlaces
		$\mathcal R_{n+1}[q^{\#}]$
		on the line $\operatorname{Re} z=-\frac12$.
	\end{corollary}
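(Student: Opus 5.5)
The plan is to deduce Corollary~\ref{cor:critical-line} by combining Theorem~\ref{thm:main} with Rodr\'iguez's interlacing-preservation result \cite[Theorem~2.1.10]{Rodriguez2010}. So the work consists of checking that the hypotheses of that result are met, and that the degree and normalisation conventions agree.

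First, apply Theorem~\ref{thm:main} to $p$ and $q$. It gives $p^{\#},q^{\#}\in\UC$, with simple zeros, and $p^{\#}$ strictly interlaces $q^{\#}$ on the unit circle. Next, check the side conditions of the Rodr\'iguez--Villegas setting.
\begin{enumerate}
\item $\deg p^{\#}=n$ and $\deg q^{\#}=n+1$, because the leading coefficients $1/p_n$ and $1/q_{n+1}$ are positive. Hence $\mathcal R_n$ and $\mathcal R_{n+1}$ are the transforms attached to the correct degrees.
\item All coefficients of $p^{\#}$ and $q^{\#}$ are positive, so $p^{\#}(1)>0$ and $q^{\#}(1)>0$. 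In particular, $1$ is not a zero of either polynomial.
\end{enumerate}
By the Rodr\'iguez--Villegas theorem, all zeros of $\mathcal R_n[p^{\#}]$ and $\mathcal R_{n+1}[q^{\#}]$ then lie on the line $\operatorname{Re}z=-\tfrac12$.

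Finally, invoke \cite[Theorem~2.1.10]{Rodriguez2010}. Strict interlacing on the unit circle, for polynomials of consecutive degrees $n$ and $n+1$ that do not vanish at $1$, is carried to strict interlacing of the transforms on the critical line, with zeros ordered by imaginary part. This gives the claim.

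The main obstacle is matching conventions with Rodr\'iguez's theorem, which has three parts.
\begin{enumerate}
\item Check that the cited result uses the same transform, with binomial $\binom{z+n-k}{n}$ for degree $n$, as opposed to a shifted variant.
\item Check that its notion of unit-circle interlacing, with arguments ordered in $[0,2\pi)$ starting at $1$, is the one in \eqref{eq:interlace}.
\item Check that its hypotheses (simple zeros, nonvanishing at $1$, possibly real coefficients or palindromicity) are exactly those we have verified.
\end{enumerate}
Palindromicity holds automatically here, since positive coefficients together with $p^{\#}\in\UC$ force it. Simplicity is supplied by Theorem~\ref{thm:main}(i). If the cited statement is phrased with the degree-$(n+1)$ polynomial first, or with a reversed order, I would note that strict interlacing on the line is symmetric in the sense required. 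Beyond this bookkeeping the argument is immediate.
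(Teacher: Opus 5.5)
Your proposal is correct and follows the paper's argument: the paper also applies Theorem~\ref{thm:main} to get strict unit-circle interlacing of $p^{\#}$ and $q^{\#}$. It then notes $p^{\#}(1)>0$ and $q^{\#}(1)>0$, so the nonvanishing condition at $1$ holds, and invokes \cite[Theorem~2.1.10]{Rodriguez2010}; the paper does not carry out your convention checks either, but simply cites that result.
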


	\section*{Declaration of generative AI and AI-assisted technologies in the manuscript preparation process}
	During the preparation of this work the authors used ChatGPT (OpenAI) to assist with language editing and LaTeX formatting. After using this tool/service, the authors reviewed and edited the content as needed and take full responsibility for the content of the published article.


\begin{thebibliography}{99}
		
		
		
		\bibitem{BorceaBranden2009Circular}
		J.~Borcea and P.~Br\"and\'en,
		P\'olya--Schur master theorems for circular domains and their boundaries,
		\emph{Ann. of Math. (2)} \textbf{170} (2009), 465--492.
		
		\bibitem{Brenti1989}
		F.~Brenti,
		Unimodal, log-concave and P\'olya frequency sequences in combinatorics,
		\emph{Mem. Amer. Math. Soc.} \textbf{81} (1989), no.~413.
		
		
		\bibitem{BY01}
		G.~Brown and Q.~Yin,
		Positivity of a class of cosine sums,
		\emph{Acta Sci. Math. (Szeged)} \textbf{67} (2001), 221--247.
		
		\bibitem{ChapotonHan2020}
		F.~Chapoton and G.-N.~Han,
		On the roots of the Poupard and Kreweras polynomials,
		\emph{Moscow J. Combin. Number Theory} \textbf{9} (2020), 163--172.
		
		\bibitem{Chen1995}
		W.~Chen,
		On the polynomials with all their zeros on the unit circle,
		\emph{J. Math. Anal. Appl.} \textbf{190} (1995), 714--724.
		
		\bibitem{ChenMaoWangXiao2026}
		X.~Chen, J.~Mao, Y.~Wang, and Q.~Xiao,
		Palindromic polynomials with alternating gamma coefficients,
		preprint, 2026.
		
		\bibitem{GarciaMashreghiRoss2017}
		S.~R.~Garcia, J.~Mashreghi, and W.~T.~Ross,
		Finite Blaschke products: a survey,
		in: \emph{Harmonic Analysis, Function Theory, Operator Theory, and Their Applications},
		Theta Ser. Adv. Math., vol.~19, 2017, pp.~133--158.
		
		\bibitem{GardnerGovil2014}
		R.~B.~Gardner and N.~K.~Govil,
		Enestr\"om--Kakeya theorem and some of its generalizations,
		in: S.~Joshi, M.~Dorff, and I.~Lahiri (eds.),
		\emph{Current Topics in Pure and Computational Complex Analysis},
		Trends in Mathematics,
		Birkh\"auser/Springer, 2014, pp.~171--199.
		
		\bibitem{HigashitaniKummerMichaelek2017}
		A.~Higashitani, M.~Kummer, and M.~Micha{\l}ek,
		Interlacing Ehrhart polynomials of reflexive polytopes,
		\emph{Selecta Math. (N.S.)} \textbf{23} (2017), 2977--2998.
		
		\bibitem{Ismail2009}
		M.~E.~H.~Ismail,
		\emph{Classical and Quantum Orthogonal Polynomials in One Variable},
		Encyclopedia of Mathematics and its Applications, vol.~98,
		Cambridge University Press, Cambridge, 2009.
		
		\bibitem{Kolbl2025}
		M.~K\"olbl,
		Properties of Ehrhart polynomials whose roots lie on the canonical line,
		\emph{Integers} \textbf{25} (2025), Paper No.~A68, 19~pp.
		
		\bibitem{KruchininKruchinin2026}
		D.~Kruchinin and V.~Kruchinin,
		A family of generating functions for reciprocal binomial coefficients and its applications,
		preprint, arXiv:2602.07822, 2026.
		
		\bibitem{LakatosLosonczi2007}
		P.~Lakatos and L.~Losonczi,
		Circular interlacing with reciprocal polynomials,
		\emph{Math. Inequal. Appl.} \textbf{10} (2007), no.~4, 761--769.
		
		\bibitem{LiuWang2007}
		L.~L.~Liu and Y.~Wang,
		A unified approach to polynomial sequences with only real zeros,
		\emph{Adv. in Appl. Math.} \textbf{38} (2007), 542--560.
		
		\bibitem{MandelRobins2019}
		A.~Mandel and S.~Robins,
		Dragging the roots of a polynomial to the unit circle,
		preprint, arXiv:1908.03208, 2019.
		
		\bibitem{Marden1966}
		M.~Marden,
		\emph{Geometry of Polynomials}, 2nd ed.,
		Mathematical Surveys, vol.~3,
		American Mathematical Society, Providence, RI, 1966.
		
		
		\bibitem{Obreschkoff1963}
		N.~Obreschkoff,
		\emph{Verteilung und Berechnung der Nullstellen reeller Polynome},
		Hochschulb\"ucher f\"ur Mathematik, vol.~55,
		Deutscher Verlag der Wissenschaften, Berlin, 1963.
		
		\bibitem{Pet15}
		T.~K.~Petersen,
		\emph{Eulerian Numbers},
		Birkh\"auser Advanced Texts: Basler Lehrb\"ucher,
		Birkh\"auser/Springer, New York, 2015.
		
		\bibitem{RahmanSchmeisser2002}
		Q.~I.~Rahman and G.~Schmeisser,
		\emph{Analytic Theory of Polynomials},
		London Mathematical Society Monographs, New Series, vol.~26,
		Oxford University Press, Oxford, 2002.
		
		\bibitem{Rodriguez2010}
		M.~A.~Rodr\'iguez,
		\emph{The Distribution of Roots of Certain Polynomials},
		Ph.D. dissertation, The University of Texas at Austin, 2010.
		
		\bibitem{RodriguezVillegas2002}
		F.~Rodr\'iguez-Villegas,
		On the zeros of certain polynomials,
		\emph{Proc. Amer. Math. Soc.} \textbf{130} (2002), no.~8, 2251--2254.
		
		\bibitem{Simanek2020}
		B.~Simanek,
		Zero spacings of paraorthogonal polynomials on the unit circle,
		\emph{J. Approx. Theory} \textbf{256} (2020), Paper No.~105437.
		
		\bibitem{Simon2007}
		B.~Simon,
		Rank one perturbations and the zeros of paraorthogonal polynomials on the unit circle,
		\emph{J. Math. Anal. Appl.} \textbf{329} (2007), 376--382.
		
		\bibitem{WangZhengChen2019}
		Y.~Wang, S.-N.~Zheng, and X.~Chen,
		Analytic aspects of Delannoy numbers,
		\emph{Discrete Math.} \textbf{342} (2019), no.~8, 2270--2277.
		
	\end{thebibliography}
\end{document}